\documentclass[12pt]{amsart}
\usepackage[lmargin=1in, rmargin=1in, tmargin=1in, bmargin=1in]{geometry} 
\usepackage{graphicx}
\usepackage{amssymb, amsthm}
\usepackage[all]{xy}
\usepackage{comment}
\usepackage[inline]{enumitem}
\usepackage{thm-restate}
\usepackage{verbatim}
\usepackage{xcolor}
\definecolor{darkgreen}{rgb}{0,0.4,0}
\usepackage{tikz}
\usepackage{soul}
\definecolor{darkblue}{rgb}{0,0,0.4}
\usepackage[bookmarks=true, bookmarksopen=true,%
    bookmarksdepth=3,bookmarksopenlevel=2,%
    colorlinks=true,%
    linkcolor=darkblue,%
    citecolor=darkblue,%
    filecolor=darkblue,%
    menucolor=darkblue,%
    urlcolor=darkblue]{hyperref} 
\usepackage[nameinlink]{cleveref}
\usepackage{todonotes}
\usepackage{cite}
\usepackage{mathrsfs}

\hypersetup{colorlinks=true,linkcolor=teal,citecolor=purple} 

\newtheorem{theorem}{Theorem}[section]
\newtheorem{lemma}[theorem]{Lemma}
\newtheorem{proposition}[theorem]{Proposition}

\theoremstyle{definition}

\newtheorem{remark}[theorem]{Remark}

\newtheorem{observation}[theorem]{Observation}

\newcommand{\Z}{{\ensuremath{\mathbb{Z}}}}

\newcommand{\FF}{\ensuremath{\mathbb{F}}}

\newcommand{\HFt}{\underline{\widehat{\mathit{HF}}}}
\newcommand{\HFhat}{\widehat{\mathit{HF}}}
\newcommand{\CFa}{\widehat{\mathit{CF}}}
\newcommand{\tCFa}{\underline{\CFa}}
\newcommand{\tHFa}{\underline{\HFa}}
\newcommand{\HFa}{\HFhat}
\newcommand{\Alg}{\mathcal{A}}
\newcommand{\CFDa}{\widehat{\mathit{CFD}}}
\newcommand{\CFAa}{\widehat{\mathit{CFA}}}
\newcommand{\tCFDa}{\underline{\CFDa}}
\newcommand{\tCFAa}{\underline{\CFAa}}

\newcommand{\Ainf}{A_\infty}
\newcommand{\bdy}{\partial}
\newcommand{\DT}{\boxtimes}
\newcommand{\HF}{\mathit{HF}}

\begin{document}

\title{Detecting Heegaard Floer homology solid tori}

\author{Akram Alishahi}
\thanks{\texttt{AA was partly supported by NSF Grant DMS-2238103}}
\address{Department of Mathematics, University of Georgia, Athens, GA 30602}
\email{\href{mailto:akram.alishahi@uga.edu}{akram.alishahi@uga.edu}}

\author{Tye Lidman}
\address{Department of Mathematics, North Carolina State University, Raleigh, NC 27607}
\email{\href{mailto:tlid@math.ncsu.edu}{tlid@math.ncsu.edu}}
\thanks{\texttt{TL was partly supported by NSF Grant DMS-2105469}}

\author{Robert Lipshitz}
\address{Department of Mathematics, University of Oregon, Eugene, OR
  97403, United States}
\email{\href{mailto:lipshitz@uoregon.edu}{lipshitz@uoregon.edu}}
\thanks{\texttt{RL was partly supported by NSF Grant DMS-2204214}}

\begin{abstract}
    We show that a rational homology solid torus is a Heegaard Floer homology solid torus if and only if it has a Dehn filling with a non-separating 2-sphere. Using this, we characterize Seifert fibered Heegaard Floer solid tori.
\end{abstract}

\maketitle

\section{Introduction}
Bordered Floer homology is an extension of Ozsv\'ath-Szab\'o's Heegaard Floer homology~\cite{OS04:HolomorphicDisks} to 3-manifolds with boundary~\cite{LOT1}. Roughly, the invariant takes as input a compact, connected, oriented 3-manifold with connected boundary, together with a parameterization of the boundary, which can be viewed as a collection of circles giving a basis for the peripheral subgroup, and associates to this data an ($A_\infty$-) module over a certain algebra.  Despite its algebraic complexity, bordered Floer homology has had a variety of applications, especially in the case of 3-manifolds with torus boundary. For instance, it is used in the proof that all rational homology spheres with Sol geometry are L-spaces~\cite[Theorem 2]{BoyerGordonWatson13:LspaceLO}.  The key insight in that proof is that the bordered module associated to the twisted I-bundle over the Klein bottle is invariant under changing the boundary parametrization by Dehn twists along the rational longitude \cite[Proposition 7]{BoyerGordonWatson13:LspaceLO}. (Recall that the rational longitude of a rational homology solid torus $M$ is the unique slope on the boundary which is trivial in $H_1(M;\mathbb{Q})$.)
 
Subsequently, Watson defined a \emph{Heegaard Floer homology solid torus} (HFST) to be a rational homology solid torus for which Dehn twisting along the rational longitude does not change the bordered invariants (see also~\cite{HW23:calculus}).  
 The goal of this paper is the following characterization of HFSTs:

\begin{theorem}\label{thm:main}
    Let $M$ be a rational homology solid torus and let $\lambda$ denote the rational longitude.  Then $M$ is a Heegaard Floer homology solid torus if and only if $M(\lambda)$ contains a non-separating 2-sphere.  
\end{theorem}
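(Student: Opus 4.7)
My approach is to use the immersed-curves interpretation of bordered Heegaard Floer homology for rational homology solid tori, due to Hanselman, Rasmussen, and Watson. Under this correspondence, $\CFDa(M)$ is represented by a finite collection $\gamma(M)$ of immersed closed curves (possibly carrying local systems) in the once-punctured boundary torus, and the action of the mapping class group by change of parametrization corresponds to the evident topological action on such multicurves. Consequently, $M$ is a Heegaard Floer homology solid torus if and only if $\gamma(M)$ is invariant under $\tau_\lambda$, up to isotopy and isomorphism of local systems.

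The first main step is a purely topological characterization: a finite multicurve in the punctured torus is $\tau_\lambda$-invariant if and only if every component is isotopic to a copy of $\lambda$ (possibly with nontrivial local system). Since $\tau_\lambda$ acts on $H_1(\partial M)\cong\Z^2$ as a shear fixing precisely the line spanned by $\lambda$, any $\tau_\lambda$-invariant component must be homologous to a multiple of $\lambda$. A finiteness argument then rules out more complicated geometric realizations, such as components that wind around the puncture, since applying powers of $\tau_\lambda$ to such a component would produce infinitely many distinct isotopy classes.

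The second step, the topological heart of the theorem, is to show that $\gamma(M)$ consists of $\lambda$-parallel components if and only if $M(\lambda)$ contains a non-separating 2-sphere. For $(\Leftarrow)$, a non-separating sphere together with $b_1(M(\lambda))=1$ produces a prime decomposition $M(\lambda)=Y\#(S^1\times S^2)$ with $Y$ a rational homology sphere; since the surgery core generates $H_1(M(\lambda);\Q)$, it must lie in the $S^1\times S^2$ summand, so $M = Y \# M_1$ with $M_1 = (S^1\times S^2)\setminus\nu(K)$ and $M_1(\lambda) = S^1\times S^2$. The bordered connect-sum formula factors $\CFDa(M) \cong \HFa(Y)\otimes\CFDa(M_1)$ and reduces the problem to $M_1$, for which the pairing theorem gives $\HFa(M_1(\lambda)) = \FF^2$; this minimal rank forces $\gamma(M_1)$ to be a single $\lambda$-parallel curve with trivial local system.

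For $(\Rightarrow)$, the pairing theorem applied to a $\lambda$-parallel $\gamma(M)$ exhibits a direct summand of $\HFa(M(\lambda))$ matching $\HFa(S^1\times S^2)$. Upgrading this Floer-theoretic summand to an actual $S^1\times S^2$ prime summand of $M(\lambda)$ is the step I expect to be the main obstacle. I would approach it via Ozsv\'ath--Szab\'o's theorem that $\HFa$ detects the Thurston norm, combined with the prime decomposition theorem: the form of $\HFa(M(\lambda))$ shows that the generator of $H_2(M(\lambda))$ has Thurston norm zero, and a primitive norm-zero class in a closed orientable 3-manifold admits a connected representative that is a non-separating 2-sphere.
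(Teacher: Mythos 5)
There are genuine gaps in both directions. First, your Step 1 characterization is false: invariance of the multicurve under $\tau_\lambda$ does \emph{not} force every component to be isotopic to a copy of $\lambda$. The correct statement (Hanselman--Rasmussen--Watson, quoted as Proposition~\ref{prop:equiv} above) is that $\HFa(M)$ can be isotoped into a neighborhood $\nu(\lambda)$ of the rational longitude \emph{through the basepoint}, and such a neighborhood supports components that wind around the puncture and are not homotopic to any power of $\lambda$ (Figure~\ref{fig:admissible-case} is exactly such an example). These components are fixed by $\tau_\lambda$, since the twist can be supported in the complementary annulus, so your ``finiteness argument'' does not rule them out. This undermines the starting point of your forward direction. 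Second, even for genuinely $\lambda$-parallel components, the concluding step fails: a primitive class of Thurston norm zero is represented by a sphere \emph{or a torus}, not necessarily a sphere (e.g., $0$-surgery on the trefoil, or any torus bundle, has a primitive norm-zero class and no non-separating $2$-sphere), and untwisted $\HFa$ cannot certify a non-separating sphere. The paper instead works with completed twisted coefficients $\FF_2[[t,t^{-1}]]$ and invokes the Alishahi--Lidman detection theorem; this also deals with the fact that pairing a $\lambda$-parallel curve against the $\lambda$-line is a non-admissible configuration, which your ``direct summand matching $\HFa(S^1\times S^2)$'' step glosses over (this is precisely what Lemma~\ref{lem:all-in-one} is for).

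Your converse direction also has a gap: from $M(\lambda)=Y\#(S^1\times S^2)$ you cannot conclude that the surgery core lies in the $S^1\times S^2$ summand. A knot that generates $H_1(M(\lambda);\Q)$ need not be isotopic into the punctured $S^1\times S^2$; it can run through the $Y$ summand arbitrarily, so the factorization $M=Y\#M_1$ (and with it the bordered connected-sum reduction) is unjustified. The paper's proof of this direction avoids any geometric decomposition: since $M(\lambda)$ contains a non-separating sphere, $\HFt(M(\lambda);\FF_2[[t,t^{-1}]])=0$, and the surgery exact triangle with these coefficients forces $\dim\HFhat(M(\mu+k\lambda))$ to be independent of $k$, which is condition (2) of Proposition~\ref{prop:equiv}. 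If you want to salvage your outline, the essential missing ingredients are the twisted-coefficient tools: the vanishing/detection theorem for non-separating spheres and either the twisted surgery triangle or the twisted pairing theorem.
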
   

HFSTs have already received considerable interest. In particular, they have a natural interpretation in terms of Hanselman-Rasmussen-Watson's immersed curve formulation of bordered Floer homology, which we will use in the proof of Theorem~\ref{thm:main}:
\begin{proposition}\label{prop:equiv}\cite[Proposition 7.11]{HRW}
    Let $M$ be a rational homology solid torus.  Let $\lambda$ denote the rational longitude of $M$, and fix a dual curve $\mu$ to $\lambda$.  Then the following are equivalent:
    \begin{enumerate}
    \item The manifold $M$ is an HFST. 
    \item The dimension $\dim \HFhat(M(\mu + k \lambda))$ of the Floer homology of the Dehn filling is independent of $k$.
    \item The immersed curve $\HFa(M)$ is supported in a neighborhood $\nu(\lambda)$ of the rational longitude (through the basepoint $z$), after pulling tight.    
    \end{enumerate}
\end{proposition}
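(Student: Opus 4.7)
The plan is to work throughout in the immersed-curves model of Hanselman--Rasmussen--Watson, viewing $\HFa(M)$ as a (pulled-tight) immersed multicurve in the once-punctured torus $T_z := \partial M \setminus \{z\}$. Two features of that model will do the work: (a) changing the boundary parameterization of $M$ by a Dehn twist along $\lambda$ corresponds, on the immersed-curve side, to applying the mapping class $\tau_\lambda$ to $\HFa(M)$; and (b) for any slope $\alpha$ on $\partial M$, the dimension $\dim \HFhat(M(\alpha))$ equals the minimal geometric intersection number $i(\HFa(M), \alpha)$ of $\HFa(M)$ with a representative of $\alpha$ in $T_z$.

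For $(1) \Leftrightarrow (3)$, I would first observe that by the definition of HFST together with (a), statement (1) is equivalent to $\tau_\lambda(\HFa(M))$ being isotopic to $\HFa(M)$ in $T_z$. I would then invoke the classical surface-topology fact that for an essential simple closed curve $\gamma$, $\tau_\lambda(\gamma) \simeq \gamma$ if and only if $\gamma$ can be isotoped to be disjoint from $\lambda$, and extend this component by component to pulled-tight multicurves. Cutting $T_z$ along $\lambda$ yields a thrice-punctured sphere, so a multicurve disjoint from $\lambda$ is supported in a regular neighborhood of $\lambda \cup \{z\}$---precisely the neighborhood $\nu(\lambda)$ through $z$ of statement (3)---and conversely any such multicurve is automatically fixed by $\tau_\lambda$.

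For $(3) \Leftrightarrow (2)$, I would apply (b): statement (2) becomes the assertion that $i(\HFa(M), \mu + k\lambda)$ is independent of $k$. If $\HFa(M)$ is supported in $\nu(\lambda)$ through $z$, every component can be isotoped to be disjoint from $\lambda$, so its geometric intersection with $\mu + k\lambda$ is independent of $k$: one can isotope $\mu + k\lambda$ to agree with $\mu$ outside a neighborhood of $\lambda$ that is disjoint from the component. Conversely, if some component crosses $\lambda$ essentially, its geometric intersection with $\mu + k\lambda$ grows linearly with $|k|$, contradicting (2).

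The main obstacle will be justifying the mapping-class-group fact in the setting of pulled-tight \emph{immersed} multicurves (rather than simple closed curves), and carefully handling immersed components wrapping around the basepoint $z$. Once these subtleties are resolved, the three equivalences reduce to standard surface topology on the once-punctured torus.
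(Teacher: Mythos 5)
You are reproving a quoted result: the paper gives no argument for this proposition at all, it simply cites \cite[Proposition 7.11]{HRW}, so there is no in-paper proof to compare against. Your overall strategy---translate (1) into invariance of the immersed multicurve under the Dehn twist $\tau_\lambda$, and (2) into intersection counts of $\HFa(M)$ with curves of slope $\mu+k\lambda$---is the natural one and is essentially how the cited result is established.

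However, your input (b) is false as stated, and the error is load-bearing. The dimension $\dim\HFhat(M(\alpha))$ is \emph{not} the minimal geometric intersection number of $\HFa(M)$ with an arbitrary representative of the slope $\alpha$ in the punctured torus: if it were, any filling along a slope realizable disjointly from $\HFa(M)$ would have vanishing $\HFhat$, which never happens for a closed $3$-manifold. The solid torus is already a counterexample: $\HFa(S^1\times D^2)$ is a single curve of the meridional slope, a parallel copy of the meridian is disjoint from it, yet $\dim\HFhat(S^2\times S^1)=2$. The correct pairing statement counts intersections with the line of slope $\alpha$ \emph{through the marked point} $z$ (the solid-torus curve in its preferred position), in minimal position in the punctured torus where curves may not be pushed across $z$, and weighted by the ranks of the local systems. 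With that correction your $(3)\Rightarrow(2)$ argument survives---the region where $\mu+k\lambda$ and $\mu$ differ can be taken in the annulus complementary to $\nu(\lambda)$, hence disjoint from $\HFa(M)$---but as written both directions of $(2)\Leftrightarrow(3)$ rest on a formula that contradicts nonvanishing of $\HFhat$. In addition, the issues you flag but do not resolve are genuine gaps rather than routine details: the statement ``$\tau_\lambda$-invariant implies realizable disjoint from $\lambda$'' is classical only for \emph{simple} closed curves, whereas the components of $\HFa(M)$ are immersed, carry local systems, and may be permuted by $\tau_\lambda$ (so one must pass to a power of the twist and prove an intersection-growth estimate valid for immersed curves, e.g.\ via the annular cover associated to $\lambda$); the same growth estimate, now against the lines of slope $\mu+k\lambda$ through $z$, is exactly what $(2)\Rightarrow(3)$ needs. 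So the route is right, but the two facts doing all the work---the corrected pairing formula and the twist-growth statement for immersed multicurves with local systems---are precisely what remain to be proved or cited precisely.
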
    

In addition to the solid torus, and the twisted I-bundle over the Klein bottle, familiar examples of HFSTs include the Seifert manifolds $D^2(1/n,-1/n)$. Early examples of HFSTs, like these, have the property that all non-longitudinal fillings are L-spaces. Indeed, Gillespie~\cite{Gillespie} showed that a rational homology solid torus $M$ has both an $S^2 \times S^1$ filling and an L-space filling if and only if all non-longitudinal fillings are L-spaces. 
Similarly, Hanselman-Rasmussen-Watson showed that if one filling of a rational homology solid torus $M$ is an L-space, then $M$ is an HFST if and only if all fillings except along the rational longitude are L-spaces~\cite[Theorem 27]{HRW22:properties}.
Consequently, Theorem~\ref{thm:main} can be viewed as characterizing HFSTs in the case that some (or equivalently every) filling is not an L-space.
There are interesting examples of such HFSTs, such as the one found by Levine-Lidman-Piccirillo~\cite[Remark 9.5]{LLP} (for which filling along the rational longitude produces $S^2\times S^1$, but no filling produces an L-space).

Using Theorem~\ref{thm:main} we can also characterize the HFSTs with Seifert fibered geometry.  
\begin{theorem}\label{thm:seifert}
A Seifert fibered rational homology solid torus $M$ is an HFST if and only if either $M$ has base orbifold a M\"obius band (with or without cone points) or is the Seifert fibered space $D^2(0; p/q, -p/q)$ for some $p/q \neq 0$.  
\end{theorem}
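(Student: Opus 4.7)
The plan is to apply Theorem~\ref{thm:main}, which reduces the HFST condition to $M(\lambda)$ containing a non-separating $2$-sphere. Since $\partial M = T^2$ and $b_1(M) = 1$, the base orbifold of the Seifert fibration on $M$ has exactly one boundary circle, and the requirement $b_1(M) = 1$ restricts this base to either a disk (with cone points) or a M\"obius band (with cone points); any other surface with boundary pushes $b_1$ above one. The theorem therefore reduces to showing that the listed cases do yield a non-separating sphere in $M(\lambda)$, and that no other disk-based Seifert fibered space does.

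For the (if) direction I would handle the two cases separately. In the M\"obius-band case, travelling around the core $c$ of the base reverses the fiber orientation, yielding the relation $chc^{-1} = h^{-1}$ and hence $2h = 0$ in $H_1(M)$, so the fiber slope $h$ is the rational longitude. Quotienting $\pi_1(M)$ by the normal closure of $h$ leaves the orbifold fundamental group of the base, namely $\mathbb{Z}\langle c\rangle \ast \mathbb{Z}/q_1 \ast \cdots \ast \mathbb{Z}/q_n$, which contains $\mathbb{Z}$ as a free factor; by Kneser--Milnor, $M(\lambda)$ has an $S^2 \times S^1$ summand and hence a non-separating sphere. For $M = D^2(0;p/q,-p/q)$, the Euler number vanishes, so the rational longitude is the section slope; filling gives the closed Seifert fibered space $S^2(p/q,-p/q)$, and eliminating $x_2 = x_1^{-1}$ from the presentation reduces the two cone relations to a single relation $qx_1 + ph = 0$ in the (automatically abelian) fundamental group, so $\pi_1(M(\lambda)) = \mathbb{Z}$ and $M(\lambda) = S^2 \times S^1$.

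For the (only if) direction, I would suppose $M = D^2(0; (q_1,p_1), \ldots, (q_n,p_n))$ is an HFST with each $q_i \geq 2$, and use that $M(\lambda)$ extends the Seifert fibration to a closed Seifert fibered space over $S^2$. Such spaces are prime, and $S^2 \times S^1$ is the unique one containing a non-separating sphere, so Theorem~\ref{thm:main} forces $M(\lambda) = S^2 \times S^1$. Writing $\lambda = b_\lambda s + a_\lambda \phi$ in the section/fiber basis on $\partial M$, the Seifert invariants of $M(\lambda)$ consist of the $n$ cones of $M$ together with a filling cone $(b_\lambda, -a_\lambda)$. The key input is the classification of Seifert fibrations of $S^2 \times S^1$ over $S^2$: there are at most two genuine cone points, and if two they must have the form $(q,p),(q,-p)$. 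A short case analysis on $n$ and on whether the filling cone is trivial ($b_\lambda = 1$) or genuine ($b_\lambda \geq 2$) then forces $q_1 = q_2 =: q$ and $p_2 \equiv -p_1 \pmod q$, giving $M \cong D^2(0;p/q,-p/q)$; the cases $n = 0, 1$ recover the solid torus (equivalently, $q = 1$). The principal obstacle is this final classification of Seifert fibrations of $S^2 \times S^1$, which amounts to showing that the orbifold fundamental group of $S^2(\alpha_1,\ldots,\alpha_n)$ for $n \geq 3$ with each $\alpha_i \geq 2$ is never cyclic, so such a SFS cannot have $\pi_1 = \mathbb{Z}$.
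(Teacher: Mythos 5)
Your proposal is correct, and its overall skeleton --- reduce everything to Theorem~\ref{thm:main}, note that $b_1(M)=1$ forces the base orbifold to be a disk or a M\"obius band, and in the disk case compare the Seifert invariants of $M(\lambda)$ against the classification of Seifert fibrations of $S^2\times S^1$ over $S^2$ --- is the same as the paper's. The genuine divergence is the M\"obius band case: you identify the fiber slope as the rational longitude homologically (from $chc^{-1}=h^{-1}$, so $2h=0$), then observe that killing the fiber gives $\pi_1(M(\lambda))\cong\mathbb{Z}\ast\mathbb{Z}/q_1\ast\cdots\ast\mathbb{Z}/q_n$, which has $\mathbb{Z}$ as a free factor, and invoke Kneser--Milnor (together with the fact that the only closed orientable prime 3-manifold with infinite cyclic fundamental group is $S^2\times S^1$) to produce an $S^2\times S^1$ summand and hence a non-separating sphere. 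The paper instead builds the sphere by hand --- the vertical annulus over an arc dual to the core of the M\"obius band, capped off by meridian disks of the filling, met once by a section over the core --- and only afterwards concludes $\lambda=\gamma$ from $b_1$ considerations. Both arguments work; yours trades an explicit two-line geometric picture for standard decomposition theorems, and has the mild advantage of making the identification $\lambda=\gamma$ transparent at the outset. One step you gloss over in the only-if (disk base) direction: extending the Seifert fibration over $M(\lambda)$, so that $M(\lambda)$ is a Seifert fibered space over $S^2$, requires knowing $\lambda\neq\gamma$, the fiber slope. This is true but needs a sentence --- for instance, with a disk base the regular fiber is non-torsion in $H_1(M;\mathbb{Q})$, so the fiber slope cannot be the rational longitude; the paper argues instead via Heil's result that filling along the fiber slope yields a connected sum of lens spaces, contradicting the fact that the longitudinal filling has $b_1=1$. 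Finally, the ``key input'' you flag (any Seifert fibration of $S^2\times S^1$ over $S^2$ has at most two cone points, necessarily of the form $(q,p),(q,-p)$) is precisely the citation the paper uses, so it is not an obstacle, and your proposed route to it (no spherical base orbifold with at least three cone points has cyclic orbifold fundamental group, plus the two-cone-point computation) is a standard way to prove it.
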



\begin{remark}
It is worth pointing out an analogous picture in the instanton world.  For simplicity, we restrict to integer homology solid tori.  Let $M$ be an integer homology solid torus and let $\iota^*\chi(M)$ denote the image of the $SU(2)$-character variety of $M$ in the pillowcase.  Then $M$ is the complement of a knot in a 3-manifold with a non-separating $S^2$ if and only if $\iota^*\chi(M)$ has no intersection with the top seam of the pillowcase, the flat connections on $\partial M$ with holonomy $-\mathit{Id}$ around the homological longitude (see, e.g., \cite[Proposition 5.1]{Zentner:SL2} and \cite[Theorem 1.6]{KM:ICM}).  
\end{remark}

This paper is organized as follows. Section~\ref{sec:background} collects the results we need about bordered Heegaard Floer homology and its immersed curve formulation. Section~\ref{sec:knots-in-red-mflds} uses the surgery exact triangle with twisted coefficients to prove that knot complements in reducible manifolds give Heegaard Floer solid tori. Section~\ref{sec:all-HFSTs-are-these} proves the other direction of Theorem~\ref{thm:main}, that all Heegaard Floer solid tori arise as such knot complements.  Finally, Section~\ref{sec:seifert} characterizes Seifert HFSTs. 

\subsection*{Acknowledgments.} The second author thanks Liam Watson for many conversations about HFSTs over the years. 

\section{Background}\label{sec:background}
We collect the facts we will use about bordered Heegaard Floer homology for 3-manifolds with torus boundary and its reformulation in terms of immersed curves.

\subsection{Classical bordered Floer homology}
Bordered Floer homology associates to the torus $T^2$ an algebra $\Alg$ over $\FF_2$ containing a distinguished pair of orthogonal idempotents $\iota_0$ and $\iota_1$. There are six other basis elements over $\FF_2$, with names like $\rho_{12}$~\cite[Section 11.1]{LOT1}. To a 3-manifold $M$ and a diffeomorphism $\bdy M\cong T^2$, bordered Floer homology associates a twisted complex (\emph{type $D$ structure}) $\CFDa(M)$ and an $\Ainf$-module $\CFAa(M)$, each well-defined up to homotopy equivalence. The \emph{pairing theorem} states that, given 3-manifolds $M_1$ and $M_2$ with identifications $\bdy M_1\cong\bdy (-M_2)\cong T^2$, there is a chain homotopy equivalence $\CFa(M_1\cup_{T^2} M_2)\simeq \CFAa(M_1)\DT\CFDa(M_2)$, where $\DT$ is an appropriate version of the tensor product, so long as one of $\CFAa(M_1)$ or $\CFDa(M_2)$ is bounded~\cite[Theorem 1.3]{LOT1}.
(The minus signs denote orientation reverses, and we may drop them in the future.) The notion of \emph{boundedness} is a kind of upper-triangularity condition, as is usually required for twisted complexes (e.g.,~\cite[Section (3l)]{SeidelBook}), and corresponds to a property of Heegaard diagrams called admissibility~\cite[Section 4.2.2]{OS04:HolomorphicDisks},~\cite[Lemmas 6.5 and 7.7]{LOT1}.

For example, if $M$ is the solid torus then the invariant $\CFAa(M)$ of $M$ with a particular parameterization of the boundary, the \emph{$0$-framing}, can be described, say, as either
\[
\FF_2\langle n \rangle,\ n=m_3(n,\rho_2,\rho_1)=m_4(n,\rho_2,\rho_{12},\rho_1)=m_5(n,\rho_2,\rho_{12},\rho_{12},\rho_1)=\cdots
\]
or
\[
\FF_2\langle n,p,q\rangle,\ m_1(p)=q,\ m_2(p,\rho_1)=n,\ m_2(n,\rho_2)=q,\ m_2(p,\rho_{12})=q.
\]
Here, we have specified all non-vanishing actions by elements of the form $\rho_i$; the actions by the idempotents are determined by $n\iota_1=n$, $p\iota_0=p$, $q\iota_0=q$. The second of these models for $\CFAa(M)$ is bounded; the first is not. Denote the second of these modules by $\mathcal{S}$.

Not every $\Alg$-module arises as the invariant of a 3-manifold with boundary. In particular, the invariants $\CFDa(M)$ associated to 3-manifolds with torus boundary have an extra property~\cite[Proposition 11.30]{LOT1}, which is now called being \emph{extendable}~\cite{HRW}. (This property can be seen as a shadow of the existence of a bordered extension of $\mathit{HF}^-$~\cite[Proposition 7.16]{LOT:torus-mod}.)

Given a closed 3-manifold $Y$ and a module $R$ over the group ring $\FF_2[H_2(Y)]$, there is a refinement $\tCFa(Y;R)$ of the Heegaard Floer homology of $Y$ with \emph{twisted coefficients $R$}~\cite[Section 8]{OS04:HolDiskProperties}. The case of interest to us is when $H_2(Y)\cong\Z$, so $R$ is a module over the Laurent polynomial ring $\FF_2[t,t^{-1}]$. In particular, if $R$ is any field so that $\FF_2[t,t^{-1}]\hookrightarrow R$, then the homology $\tHFa(Y;R)$ vanishes if and only if $Y$ contains a homologically essential embedded 2-sphere (or, equivalently, has an $S^2\times S^1$-summand)~\cite[Theorem 7.11]{AL19:incompressible}.

There is also a twisted coefficient version of bordered Floer homology. Given a module $R$ over $\FF_2[H_2(M,\bdy M)]$ there are twisted invariants $\tCFAa(M;R)$ and $\tCFDa(M;R)$~\cite[Sections 6.4 and 7.4]{LOT1}. For example, for the $0$-framed solid tori above, if we identify $H_2(S^1\times D^2,T^2)=\Z$ so that $\FF_2[H_2(M,\bdy M)]=\FF_2[t,t^{-1}]$, and take $R=\FF_2[[t,t^{-1}]$ to be the Laurent series, then the twisted versions of the modules above are, say, 
\[
R\langle n\rangle,\ m_3(n,\rho_2,\rho_1)=tn,\ m_4(n,\rho_2,\rho_{12},\rho_1)=t^2n,\ \cdots
\]
(unbounded) or
\[
R\langle n,p,q\rangle,\ m_1(p)=q,\ m_2(p,\rho_1)=n,\ m_2(n,\rho_2)=tq,\ m_2(p,\rho_{12})=tq
\]
(bounded). Denote the second of these modules by $\underline{\mathcal{S}}$.

The twisted coefficient bordered invariants again satisfy a pairing theorem~\cite[Theorem 9.44]{LOT1}. The case of interest to us is when $M_1$ and $M_2$ are rational homology solid tori, $H_2(M_1\cup_{T^2}M_2)\cong \Z$, and $R=\FF_2[[t,t^{-1}]$, viewed as an algebra over $\FF_2[H_2(M_1,\bdy M_1)]$ and $\FF_2[H_2(M_1\cup_{T^2}M_2)]$ in the obvious way. In this case,
\begin{equation}\label{eq:twist-pair-special-case}
\tCFa(M_1\cup_{T^2}M_2;R)\simeq \tCFAa(M_1;R)\DT\CFDa(M_2).
\end{equation}

We conclude with two lemmas about vanishing of twisted Floer groups.

\begin{lemma}\label{lem:all-in-one}
    Let $P$ be a finitely generated type $D$ structure over $\Alg$ so that $\iota_1P=0$ (i.e., all generators of $P$ lie over the idempotent $\iota_0$). Then $\underline{\mathcal{S}}\DT P$ has trivial homology.
\end{lemma}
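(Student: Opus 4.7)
The plan is to compute $\underline{\mathcal{S}} \DT P$ directly from the explicit bounded model of $\underline{\mathcal{S}}$ and show that its differential restricts to an $R$-linear isomorphism between two halves of the complex, hence is acyclic.

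First I identify the generators. Because every generator of $P$ lies in $\iota_0 P$, and the generator $n \in \underline{\mathcal{S}}$ has right idempotent $\iota_1$ while $p$ and $q$ have right idempotent $\iota_0$, the idempotent matching in the box tensor product eliminates all pairs involving $n$. So $\underline{\mathcal{S}} \DT P$ is a free $R$-module with basis $\{p \otimes y\} \cup \{q \otimes y\}$, where $y$ ranges over an $\FF_2$-basis of $P$. By finite generation of $P$, this basis is finite, of size $2\dim_{\FF_2} P$.

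Next I compute the box differential $\partial(x \otimes y) = \sum_{k \ge 0} m_{k+1}(x, a_1, \ldots, a_k) \otimes y'$, summed over terms $a_1 \otimes \cdots \otimes a_k \otimes y'$ in $\delta^k(y)$. Since only $m_1$ and $m_2$ are nonzero on $\underline{\mathcal{S}}$, only $k = 0, 1$ contribute. The element $q$ appears in no nontrivial $m_1$ or $m_2$ output from the left, so $\partial(q \otimes y) = 0$. For $p \otimes y$, the $k=0$ term contributes $m_1(p) \otimes y = q \otimes y$. Among $k=1$ terms, $m_2(p, \rho_1) = n$ would land on $n \otimes y'$ and force $y' \in \iota_1 P = 0$, so contributes nothing, while $m_2(p, \rho_{12}) = tq$ contributes $tq \otimes y'$ for each $\rho_{12} \otimes y'$ in $\delta^1(y)$. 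Thus
\[
\partial(p \otimes y) = q \otimes y + t \sum_{y'} c_{y, y'}\, q \otimes y'
\]
for some $c_{y, y'} \in \FF_2$.

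Viewed as an $R$-linear map from the free submodule on $\{p \otimes y\}$ to the free submodule on $\{q \otimes y\}$, the matrix of $\partial$ has the form $I + tN$ for a finite $\FF_2$-matrix $N$. Its determinant lies in $1 + tR$ and is therefore a unit in $R = \FF_2[[t, t^{-1}]$, so the map is an $R$-linear isomorphism. Hence $\partial$ induces an isomorphism between the two halves of the complex and $H_*(\underline{\mathcal{S}} \DT P) = 0$. The only real obstacle is pinning down the idempotent bookkeeping and confirming that the listed $m_1, m_2$ actions on $\underline{\mathcal{S}}$ are exhaustive; once those are in place the argument reduces to linear algebra over a ring in which $1 + tR$ consists of units.
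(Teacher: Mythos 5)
Your argument is correct in substance and reaches the conclusion by a more computational route than the paper. The paper filters $\underline{\mathcal{S}}\DT P$ by powers of $t$, uses completeness of this filtration, and notes that the $E^1$-page vanishes because $m_1(p)=q$ cancels the generators $p\otimes x$ and $q\otimes x$ in pairs. You instead exploit finite generation to write the entire differential as a finite matrix $I+tN$ from the $p$-span to the $q$-span and observe that its determinant, having constant term $1$, is invertible in the Laurent series field, so the complex is acyclic. The essential point is the same in both proofs (the $t^0$-part of the differential is the identity pairing $p\otimes y$ with $q\otimes y$); your version trades the spectral-sequence and completeness bookkeeping for a determinant computation, which is legitimate precisely because $P$ is assumed finitely generated, whereas the paper's filtration argument is the one that would generalize when only completeness, not finiteness, is available.

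One caveat: your computation implicitly assumes that $P$ is reduced, i.e.\ that $\delta^1$ has no terms with coefficient $\iota_0$. For a general type $D$ structure such terms (an internal differential $D$ on $P$) are allowed, and then strict unitality gives $m_2(q,\iota_0)=q$ and $m_2(p,\iota_0)=p$, so $\partial(q\otimes y)$ need not vanish and $\partial(p\otimes y)$ acquires $p\otimes y'$ terms; the differential is then not literally a map from the $p$-span to the $q$-span, and the matrix is not $I+tN$. This is easy to repair and worth one sentence: either replace $P$ by a homotopy equivalent reduced model, which does not change the homotopy type of $\underline{\mathcal{S}}\DT P$ because $\underline{\mathcal{S}}$ is bounded, or note that in the non-reduced case $\underline{\mathcal{S}}\DT P$ is the mapping cone of the chain map $I+tN$ from $(\iota_0P\otimes R,D)$ to itself, which is an isomorphism of complexes and hence has acyclic cone. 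With that addition your proof is complete.
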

\begin{proof}
    There is a filtration on $\underline{\mathcal{S}}\DT_{\Alg} P$ by $t$-powers, i.e., induced by the degree filtration on $\FF_2[[t,t^{-1}]$, and $\underline{\mathcal{S}}\DT_{\Alg} P$ is complete with respect to this filtration. The $E^1$-page of the associated spectral sequence is isomorphic, as an $\FF$-vector space, to $\iota_1P$: generators of the form $p\otimes x$ and $q\otimes x$ cancel in pairs. The result follows.
\end{proof}

\begin{lemma}\label{lem:untwist-to-twist}
    If $P$ is a type $D$ structure so that $\mathcal{S}\DT P$ has trivial homology, then $\underline{\mathcal{S}}\DT P$ also has trivial homology.
\end{lemma}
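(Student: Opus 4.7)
The plan is to parametrize both complexes by a single polynomial variable $u$ and then apply the structure theorem for modules over the PID $\FF_2[u]$. Write $C := \mathcal{S} \DT P$ with differential $\partial_C$. From the explicit actions on $\mathcal{S}$ and $\underline{\mathcal{S}}$ given in the excerpt, $\partial_C$ decomposes as $\partial_0 + \delta$, where $\partial_0$ collects the contributions from $m_1(p) = q$ and $m_2(p, \rho_1) = n$ while $\delta$ collects those from $m_2(n, \rho_2) = q$ and $m_2(p, \rho_{12}) = q$. Since $\underline{\mathcal{S}}$ differs from $\mathcal{S}$ precisely by multiplying the latter two actions by $t$, the differential on $\underline{\mathcal{S}} \DT P$ takes the form $\partial_0 + t\delta$ on $C \otimes_{\FF_2} R$. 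A direct computation---using only the fact that the coefficient of $\rho_{12}$ in $\partial_P^2 = 0$ gives $D_{\rho_2} \circ D_{\rho_1} = 0$, where $D_\rho$ denotes the $\rho$-component of $\partial_P$---verifies the bicomplex identities $\partial_0^2 = \delta^2 = \partial_0\delta + \delta\partial_0 = 0$, so $\partial(u) := \partial_0 + u\delta$ squares to zero for every $u$.

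Next, view $(C[u], \partial(u))$, with $C[u] := C \otimes_{\FF_2} \FF_2[u]$, as a chain complex of free $\FF_2[u]$-modules. Specializing at $u = 1$ recovers $\mathcal{S} \DT P$, and the flat base change $\FF_2[u] \to R$ with $u \mapsto t$---flat because $R$ is a field, hence torsion-free over the PID $\FF_2[u]$---recovers $\underline{\mathcal{S}} \DT P$. Assuming $P$ is finitely generated (so that $C$ is finite-dimensional over $\FF_2$), $H := H_*(C[u], \partial(u))$ is a finitely generated $\FF_2[u]$-module. The short exact sequence of complexes
\[0 \to C[u] \xrightarrow{u-1} C[u] \to C \to 0\]
induces a long exact sequence in homology, and the hypothesis $H_*(C, \partial_C) = 0$ then forces multiplication by $(u-1)$ to be an isomorphism on $H$.

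Finally, by the structure theorem over the PID $\FF_2[u]$, a finitely generated module on which $(u-1)$ acts as an isomorphism has no free summands and decomposes as a direct sum of cyclic torsion modules $\FF_2[u]/(f)$ with each $f$ coprime to $(u-1)$. Under the base change $u \mapsto t$, each such summand becomes $R/(f(t))R$, which vanishes because $f$ is a nonzero polynomial, so $f(t)$ is a unit in the field $R$. By flatness of $R$ over $\FF_2[u]$, we conclude $H_*(\underline{\mathcal{S}} \DT P) = H \otimes_{\FF_2[u]} R = 0$, as required. The main technical step is the bicomplex decomposition in paragraph one; everything after that is standard commutative algebra over $\FF_2[u]$.
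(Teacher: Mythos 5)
Your proof is correct and follows essentially the same route as the paper: your interpolating complex $(C[u],\partial_0+u\delta)$ is exactly the paper's intermediate module $\mathscr{S}\DT P$ over a one-variable PID (with $t$ renamed $u$), and your long-exact-sequence plus structure-theorem endgame is a hands-on version of the paper's appeal to the Universal Coefficient Theorem and flat base change. Two harmless quibbles: $\partial_0$ should also absorb the terms coming from the idempotent (internal differential) part of $\delta^1_P$ unless you first replace $P$ by a reduced model---the bicomplex identities still hold, forced by the $t$-grading---and the finite-generation hypothesis you add is fine, since the lemma is only applied to finitely generated $\CFDa(C_i)$.
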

\begin{proof}
    Let $\mathscr{S}$ be the $\Ainf$-module with the same generators and operations as $\underline{\mathcal{S}}$, but defined over $\FF_2[t,t^{-1}]$. So, $\underline{\mathcal{S}}=\mathscr{S}\otimes_{\FF_2[t,t^{-1}]}\FF_2[[t,t^{-1}]$ and $\mathcal{S}=\mathscr{S}\otimes_{\FF_2[t,t^{-1}]}\FF_2$, and corresponding statements hold after tensoring with $P$. Since $\FF_2[t,t^{-1}]$ is a PID, the Universal Coefficient Theorem applies, so the dimension of $H_*(\underline{\mathcal{S}}\DT P)$ (over $\FF_2[[t,t^{-1}]$) is the rank of $H_*(\mathscr{S}\DT P)$, and the dimension of $\mathcal{S}\DT P$ is at least this large. The result follows.
\end{proof}

\subsection{Bordered Floer homology via immersed curves}
Hanselman, Rasmussen, and Watson gave a reformulation of bordered Floer homology for 3-manifolds with torus boundary. First, they showed that homotopy equivalence classes of extendable type $D$ structures over $\Alg$ are in bijection with what they call weak equivalence classes of admissible train tracks in the punctured torus, perhaps decorated with local systems of $\FF_2$-modules~\cite[Proposition 3.6]{HRW}. Immersed (closed) 1-manifolds in the punctured torus are a special case of train tracks, and they then show that any admissible train track with local system is weakly equivalent to an immersed 1-manifold with local system~\cite[Theorem 1.5]{HRW}.
(The immersed curves that arise are \emph{unobstructed}~\cite[Theorem 4.11]{HRW}, meaning that they lift to embedded loops in appropriate covers~\cite[Definition~4.1]{HRW}.)

Their correspondence depends on an extra marking of the torus, by a pair of embedded circles in $T^2$ generating $\pi_1(T^2)$ and intersecting transversely in one point. Equivalently, it depends on an identification of $T^2$ as a quotient of $[0,1]^2$ by identifying opposite sides. An important property of their construction is the following:
\begin{observation}\label{obs:all-in-one}
    Let $T$ be an admissible train track in $T^2$ which is disjoint from the circle $[0,1]\times\{0\}$. Then the corresponding type $D$ structure $P$ satisfies $\iota_1P=0$.
\end{observation}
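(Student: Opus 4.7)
The plan is to unwind the HRW correspondence enough to read off the idempotent decomposition from the geometry of the train track. I will rely on the construction recalled in the paragraph preceding the observation: under the HRW identification of $T^2$ with $[0,1]^2/{\sim}$, the two marking circles are $\alpha_0 = [0,1]\times\{0\}$ and $\alpha_1 = \{0\}\times[0,1]$, and the generators of the type $D$ structure $P$ associated to an admissible train track $T$ are in bijection with the intersection points $T\cap(\alpha_0\cup\alpha_1)$, with the idempotent splitting dictated by which of the two circles the intersection point lies on.

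First I would pin down the convention: in the HRW setup the horizontal circle $\alpha_0=[0,1]\times\{0\}$ is the one whose intersections with $T$ give rise to generators in $\iota_1 P$, while intersections with the vertical circle $\alpha_1=\{0\}\times[0,1]$ give generators in $\iota_0 P$. (This is the convention consistent with the example $\CFAa(S^1\times D^2)$ recalled earlier, where the $0$-framed solid torus has a single $\iota_1$-generator $n$ and its dual curve crosses the horizontal circle once.) I would cite the relevant statements in~\cite[Section 3]{HRW}, in particular the correspondence between extendable type $D$ structures and admissible train tracks.

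With this convention fixed, the observation is immediate: since $T\cap\alpha_0 = \emptyset$ by hypothesis, every generator of $P$ corresponds to an intersection point in $T\cap\alpha_1$, hence lies over the idempotent $\iota_0$. Equivalently, $\iota_1 P = 0$. A local-system decoration on $T$ does not change this, since the local system is placed at intersection points of $T$ with the marking circles and therefore preserves the idempotent of each generator.

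The only potential obstacle is the bookkeeping issue of which marking circle corresponds to which idempotent in HRW's conventions, which could in principle differ from the conventions used elsewhere in the paper. I would verify this by checking the observation on the two explicit examples already displayed in Section~\ref{sec:background}, namely the $0$-framed solid torus $\mathcal{S}$ (and its twisted variant $\underline{\mathcal{S}}$): the corresponding immersed curve is the horizontal circle pushed off the basepoint, and this is the dual of the configuration under consideration, consistent with the idempotent count. Once the convention is confirmed, no further work is required.
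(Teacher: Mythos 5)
Your proposal is correct and takes essentially the paper's (implicit) route: the paper states this as an unproved observation about the Hanselman--Rasmussen--Watson construction, in which the generators of the associated type $D$ structure are precisely the intersections of the train track with the two marking circles, with the idempotent determined by which circle, so disjointness from $[0,1]\times\{0\}$ immediately forces $\iota_1P=0$. Your stated idempotent convention (horizontal circle $\leftrightarrow\iota_1$, vertical circle $\leftrightarrow\iota_0$, unaffected by local systems) is the one consistent with the paper's own example in Figure~\ref{fig:inadmissible-case}, where a curve disjoint from the horizontal circle yields only $\iota_0$-generators with $\rho_{12}$-differentials, so no further argument is needed.
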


They give two versions of the pairing theorem~\cite[Theorems 2.2 and 4.11]{HRW}. For general train tracks, one embeds one train track so that outside $[1/2,1]^2$ it consists of horizontal and vertical segments. One rotates the other train track by $\pi/2$ and embeds it so that outside $[0,1/2]^2$ it, too, consists of horizontal and vertical train tracks. Then, one computes a version of Lagrangian intersection Floer homology of the pair (which is combinatorial, by the Riemann mapping theorem). For immersed curves, one rotates one immersed curve by $\pi/2$ and computes Lagrangian intersection Floer homology; the embedding is not important. In both cases, however, one needs an extra condition, \emph{admissibility}, to systematically guarantee finiteness of sums. For train tracks, they formulate admissibility as the condition that there is no immersion of an annulus with one boundary component on each train track~\cite[Before Proposition 3.7]{HRW}. For immersed curves, admissibility is the condition that every periodic domain has positive and negative multiplicities~\cite[Definition 4.7]{HRW}. The latter condition is vacuously true unless there is a pair of components which are \emph{commensurable}, i.e., so that the corresponding elements of the fundamental group of the punctured torus have a common multiple.

\section{Knot complements in reducible manifolds}\label{sec:knots-in-red-mflds}
In this section, we prove the easy direction of Theorem~\ref{thm:main}:
\begin{proposition}
Let $Y$ be a closed 3-manifold with $b_1 = 1$ which contains a non-separating 2-sphere.  Let $K$ be a knot in $Y$ which is infinite order in $H_1(Y)$.  Then the complement of $K$ is a Heegaard Floer homology solid torus.    
\end{proposition}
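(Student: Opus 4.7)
The plan is to apply the Ozsv\'ath--Szab\'o surgery exact triangle with twisted coefficients, exploiting that $Y$, having a non-separating $2$-sphere, has vanishing twisted Floer homology.

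First, I would verify that the meridian $\mu$ of $K$ is (up to orientation) the rational longitude $\lambda$ of $M$. Using the long exact sequence of the pair $(Y,M)$ and the hypothesis that $[K]$ has infinite order in $H_1(Y)$, one checks that $[\mu]$ is torsion in $H_1(M)$ and so vanishes in $H_1(M;\Q)$; since the rational longitude is the unique slope (up to orientation) trivial in $H_1(M;\Q)$, one has $\lambda=\mu$. Proposition~\ref{prop:equiv}(2) then reduces the statement to proving that $\dim\HFhat\bigl(M(\lambda_K+k\mu)\bigr)$ is independent of $k\in\Z$, where $\lambda_K$ is any longitude of $K$ (a dual curve to $\mu$).

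For each $k$, I would invoke the surgery exact triangle on the three pairwise distance-$1$ slopes $\mu$, $\lambda_K+k\mu$, $\lambda_K+(k+1)\mu$,
\[
\HFhat\bigl(M(\lambda_K+k\mu)\bigr)\to\HFhat\bigl(M(\lambda_K+(k+1)\mu)\bigr)\to\HFhat(Y)\to\cdots,
\]
and upgrade it to a twisted version over $R=\FF_2[[t,t^{-1}]$, viewed as an algebra over $\FF_2[t,t^{-1}]=\FF_2[H_2(M,\bdy M)]$ via the twisted pairing theorem recalled in Section~\ref{sec:background}. The middle term becomes $\tHFa(Y;R)$, which vanishes by the result cited from \cite{AL19:incompressible} since $Y$ contains a non-separating $2$-sphere; hence the twisted triangle yields isomorphisms $\tHFa\bigl(M(\lambda_K+k\mu);R\bigr)\cong\tHFa\bigl(M(\lambda_K+(k+1)\mu);R\bigr)$ for every $k$.

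Finally, I would translate these twisted isomorphisms into untwisted equalities of dimensions. Since $\mu$ is the only slope whose filling of $M$ has $b_1>0$, each $M(\lambda_K+k\mu)$ is a rational homology sphere; in particular, its $H_2$ vanishes rationally, the twisting acts trivially, and so $\tHFa\bigl(M(\lambda_K+k\mu);R\bigr)\cong\HFhat\bigl(M(\lambda_K+k\mu)\bigr)\otimes_{\FF_2}R$. Comparing $R$-dimensions gives $\dim\HFhat\bigl(M(\lambda_K+k\mu)\bigr)=\dim\HFhat\bigl(M(\lambda_K+(k+1)\mu)\bigr)$ for all $k$, the desired constancy. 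The main technical obstacle I expect is organizing the twisted surgery exact triangle consistently across all three fillings: one must set up the coefficient system so that the twisting is genuinely nontrivial on $Y$ (so the \cite{AL19:incompressible} vanishing applies) yet restricts to a trivially-acting twist on the rational homology sphere fillings (so that the final untwisting step is automatic). The natural choice coming from $H_2(M,\bdy M)=\Z$ accomplishes both, but checking compatibility with the cobordism maps is the delicate bookkeeping.
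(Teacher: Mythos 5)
Your proposal is correct and takes essentially the same route as the paper's proof: identify the meridian of $K$ with the rational longitude of $M$, run the surgery exact triangle with $\FF_2[[t,t^{-1}]$-coefficients so that the $Y$-term vanishes by the non-separating sphere (via the cited result of \cite{AL19:incompressible}), observe that twisted and untwisted dimensions agree for the rational homology sphere fillings, and conclude by Proposition~\ref{prop:equiv}. The only cosmetic difference is that the paper simply quotes the twisted surgery exact sequence (Ozsv\'ath--Szab\'o; Ai--Peters) rather than deriving it from the twisted pairing theorem, so the coefficient bookkeeping you flag as the delicate step is absorbed into that citation.
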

\begin{proof}
Let $M$ denote the complement of $K$.  It is straightforward to verify that $M$ is a rational homology solid torus.  Parametrize $\partial M$ by choosing the rational longitude $\lambda$ and a dual curve $\mu$. Consider the surgery exact sequence with Laurent series coefficients   
\[
\cdots\to \HFt(M(\mu);\FF_2[[t,t^{-1}]) \to \HFt(M(\mu + \lambda);\FF_2[[t,t^{-1}]) \to \HFt(M(\lambda);\FF_2[[t,t^{-1}]) \to \cdots 
\]
(e.g.,~\cite[Theorem 9.21]{OS04:HolDiskProperties} or~\cite[Theorem 3.1]{AiPeters10}).
Since $M(\lambda)$ contains a non-separating 2-sphere,
$\dim \HFt(M(\lambda);\FF_2[[t,t^{-1}])=0$.  Since $M(\mu)$ and $M(\mu + \lambda)$ are rational homology spheres, the dimensions of their twisted Heegaard Floer homologies over the field $\FF_2[[t,t^{-1}]$ are the same as the dimensions of their untwisted Heegaard Floer homologies over $\FF_2$. Hence, $\dim \HFhat(M(\mu)) = \dim \HFhat(M(\mu + \lambda))$.  Note that $\mu + \lambda$ is also a dual curve for $\lambda$, so we can apply the same argument to see that $\dim \HFhat(M(\mu + \lambda)) = \dim \HFhat(M(\mu + 2\lambda))$, and so on: $\dim \HFhat(M(\mu)) = \dim \HFhat(M(\mu + k \lambda))$ for all $k$.  The result now follows from Proposition~\ref{prop:equiv}.  
\end{proof}

\section{All HFSTs arise as such}\label{sec:all-HFSTs-are-these}
In this section, we prove the other easy direction of Theorem~\ref{thm:main}.  By Proposition~\ref{prop:equiv}, it suffices to prove:
\begin{proposition}\label{prop:other-easy-dir}
Let $M$ be a rational homology solid torus such that $\HFa(M)$ is supported in a neighborhood of $\lambda$ after pulling tight.  Then $\HFt(M(\lambda);\FF_2[[t,t^{-1}]) = 0$, and hence $M(\lambda)$ has a non-separating 2-sphere.   
\end{proposition}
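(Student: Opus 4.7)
The plan is to combine the twisted pairing theorem with Lemma~\ref{lem:all-in-one}, using the HRW correspondence to translate the immersed-curves hypothesis into a statement about idempotents.

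Concretely, I would first fix a parametrization of $\partial M$ using the rational longitude $\lambda$ and a dual curve $\mu$, and choose the HRW identification $T^2\cong [0,1]^2/{\sim}$ so that $\mu$ corresponds to the horizontal circle $[0,1]\times\{0\}$ and $\lambda$ to the vertical circle $\{0\}\times[0,1]$. With this parametrization, the twisted bordered type $A$ module of the solid torus used to fill $M$ along $\lambda$ is precisely the module $\underline{\mathcal{S}}$ from Section~\ref{sec:background}, so the twisted pairing theorem~\eqref{eq:twist-pair-special-case} yields
\[
\tCFa(M(\lambda);\FF_2[[t,t^{-1}]) \simeq \underline{\mathcal{S}}\DT \CFDa(M).
\]

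Next, I would use the immersed-curves hypothesis to control $\CFDa(M)$. By the HRW correspondence recalled in Section~\ref{sec:background}, $\CFDa(M)$ is represented by an admissible train track with local system in the punctured torus, and the assumption that $\HFa(M)$ is supported in a neighborhood $\nu(\lambda)$ of the rational longitude after pulling tight means this train track can be chosen inside $\nu(\lambda)$. In the parametrization above, $\nu(\lambda)$ may be taken disjoint from the horizontal circle $[0,1]\times\{0\}$, so Observation~\ref{obs:all-in-one} gives $\iota_1 \CFDa(M) = 0$.

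Applying Lemma~\ref{lem:all-in-one} with $P = \CFDa(M)$ then shows that $\underline{\mathcal{S}}\DT\CFDa(M)$ has trivial homology, hence $\tHFa(M(\lambda);\FF_2[[t,t^{-1}]) = 0$. Since $\lambda$ is the rational longitude, capping off a rational Seifert surface for $\lambda$ with multiples of the meridian disk produces a nonzero class in $H_2(M(\lambda);\Q)$, so $M(\lambda)$ has $b_1 \geq 1$ and \cite[Theorem 7.11]{AL19:incompressible} applies to yield the non-separating 2-sphere. The main obstacle I anticipate is the convention-matching in the first step: one must verify that HRW's identification of $T^2$ can be chosen so that $\lambda$ corresponds to the vertical circle while the solid torus filling along $\lambda$ simultaneously has twisted invariant $\underline{\mathcal{S}}$ rather than some reparametrized variant. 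Once this compatibility is pinned down, the remainder of the argument is an almost immediate sequence of applications of the lemmas in Section~\ref{sec:background}.
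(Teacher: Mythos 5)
Your reduction is the same as the paper's: use the twisted pairing theorem to reduce to showing $H_*\bigl(\underline{\mathcal{S}}\DT\CFDa(M)\bigr)=0$, and then try to force this vanishing algebraically. The gap is in the middle step: it is \emph{not} true that the support hypothesis lets you choose a representative train track disjoint from the horizontal circle $[0,1]\times\{0\}$, so you cannot conclude $\iota_1\CFDa(M)=0$. The neighborhood $\nu(\lambda)$ in Proposition~\ref{prop:equiv} is a neighborhood of the copy of $\lambda$ \emph{through the basepoint} $z$, and the puncture sits on that circle; components of $\HFa(M)$ lying in $\nu(\lambda)$ are free to wrap around the puncture, and such components necessarily cross the horizontal circle no matter how you isotope them, so Observation~\ref{obs:all-in-one} does not apply to them. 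Figure~\ref{fig:admissible-case} is exactly such a component. Indeed, if your claim held, then every HFST would have $\iota_1\CFDa(M)=0$, which by \cite[Proposition 7.13]{HRW} (see the remark at the end of Section~\ref{sec:all-HFSTs-are-these}) forces $\lambda$ to compress, so every irreducible HFST would be a solid torus --- contradicted by the twisted $I$-bundle over the Klein bottle, $D^2(1/n,-1/n)$, etc. So as written, your argument only covers (essentially) the solid torus case.

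The fix, which is what the paper does, is to decompose $\CFDa(M)=\bigoplus_i\CFDa(C_i)$ over the components $C_i$ of the immersed multicurve and argue component by component. For components homotopic to a power of $\lambda$, one can indeed push them off the circle through the basepoint, and your mechanism (Observation~\ref{obs:all-in-one} plus Lemma~\ref{lem:all-in-one}) applies. For the remaining components --- those wrapping around the puncture, hence not commensurable with $\lambda$ --- a different argument is needed: the pair $(\lambda, C_i)$ is admissible, the curves can be made disjoint so the untwisted Floer homology $\HF(\lambda,C_i)\cong H_*(\mathcal{S}\DT\CFDa(C_i))$ vanishes, and Lemma~\ref{lem:untwist-to-twist} then upgrades this to vanishing of the twisted complex $\underline{\mathcal{S}}\DT\CFDa(C_i)$. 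Your proposal is missing this second case entirely; without it the proof does not go through.
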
 
\begin{proof}
    If $\HFt(M(\lambda);\FF_2[[t,t^{-1}]) = 0$, then $M(\lambda)$ contains a non-separating 2-sphere~\cite[Theorem 1.1]{AL19:incompressible}. So, by Formula~\eqref{eq:twist-pair-special-case}, it suffices to prove that 
    \[
    H_*\bigl(\underline{\mathcal{S}}\DT\CFDa(M)\bigr)=0.
    \]
    Let $C_1,\dots,C_k$ be the components of the immersed curve $\HFa(M)$ (together with their local systems). For each component $C_i$ there is a corresponding (chain homotopy equivalence class of) type $D$ structure, which we denote $\CFDa(C_i)$, and $\CFDa(M)=\bigoplus_{i=1}^k\CFDa(C_i)$. So, it suffices to prove that, for each $i$, $H_*\bigl(\underline{\mathcal{S}}\DT\CFDa(C_i)\bigr)=0$. There are two cases. We continue to denote the rational longitude by $\lambda$.

    \emph{Case 1.} $C_i$ is not homotopic to $\lambda^j$ for any $j$. By hypothesis, $C_i$ is regularly homotopic to a curve which is disjoint from $\lambda$. The pair $(\lambda, C_i)$ is admissible (because any pair of curves which are not commensurable are admissible), so $H_*(\mathcal{S}\DT\CFDa(C_i))\cong \HF(\lambda,C_i)=0$. Hence, the result follows from Lemma~\ref{lem:untwist-to-twist}. (See Figure~\ref{fig:admissible-case} for an example.)

    \emph{Case 2.} $C_i$ is homotopic to $\lambda^j$. We claim that $C_i$ is regularly homotopic to a curve that lies in a neighborhood of $\lambda$. Let $\Sigma$ be the cover of the punctured torus corresponding to the subgroup $\langle \lambda^j\rangle\subset \pi_1(T^2\setminus \{p\})$ of the fundamental group of the punctured torus (with respect to some basepoint on $\lambda$). So, $\Sigma$ is topologically a cylinder. By hypothesis, $C_i$ lifts to a loop in $\Sigma$ representing a generator of $\pi_1(\Sigma)=\Z$ and, in fact, this loop is embedded~\cite[Definition 4.1]{HRW}.
    The preimage of $\lambda$ is also an embedded loop representing a generator of $\pi_1$. Any two such loops in the cylinder are regularly homotopic. Projecting that regular homotopy to the punctured torus gives (and stopping just before the end) gives the desired regular homotopy of $C_i$ into a neighborhood of $\lambda$. Now, by Observation~\ref{obs:all-in-one}, the corresponding type $D$ structure $\CFDa(C_i)$ has $\iota_1\CFDa(C_i)=0$. Hence, the result follows from Lemma~\ref{lem:all-in-one}. (See Figure~\ref{fig:inadmissible-case} for an example.)
\end{proof}

\begin{figure}
    \centering
    \begin{tikzpicture}[scale=4]
        \draw[red] (0,0) -- (1,0) -- (1,1) -- (0,1) -- (0,0);
        \draw[darkgreen, dashed] (0,.4) -- (1,.4);
        \draw[blue] (0,.6) to[out=0,in=270] (.33,1);
        \draw[blue] (.66,1) to[out=270,in=180] (1,.8);
        \draw[blue] (0,.8) to[out=0,in=180] (1,.6);
        \draw[blue] (0,.2) to[out=0,in=90] (.33,0);
        \draw[blue] (.66,0) to[out=90,in=180] (1,.2);
    \end{tikzpicture}
    \caption{\textbf{An example of the first case in the proof of Proposition~\ref{prop:other-easy-dir}}. The longitude is \textcolor{darkgreen}{dashed} and the immersed curve is \textcolor{blue}{solid}; this pair is admissible.}
    \label{fig:admissible-case}
\end{figure}
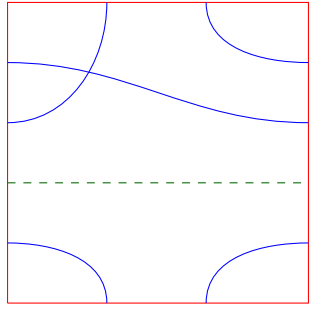

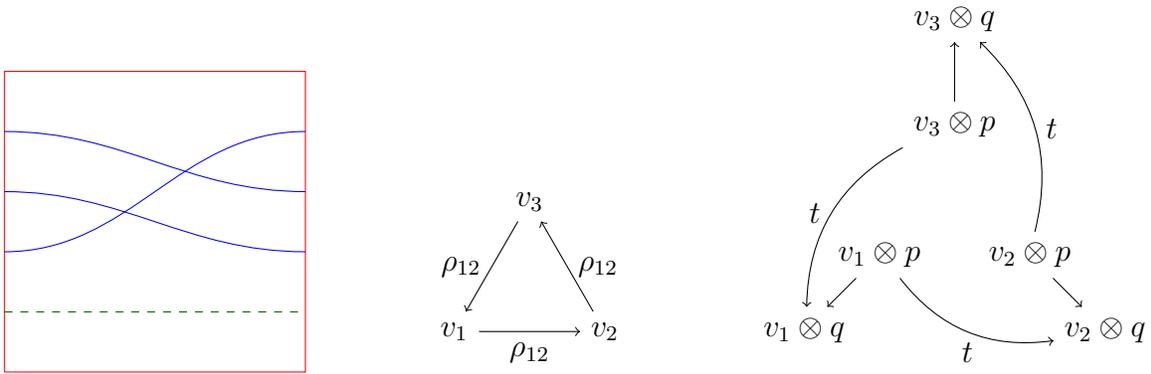
\begin{figure}
    \centering
    \begin{tikzpicture}[scale=4]
        \draw[red] (0,0) -- (1,0) -- (1,1) -- (0,1) -- (0,0);
        \draw[darkgreen, dashed] (0,.2) -- (1,.2);
        \draw[blue] (0,.4) to[out=0,in=180] (1,.8);
        \draw[blue] (0,.6) to[out=0,in=180] (1,.4);
        \draw[blue] (0,.8) to[out=0,in=180] (1,.6);
    \end{tikzpicture}\qquad\qquad
    \begin{tikzpicture}[scale=2]
        \node at (0,0) (bl) {$v_1$};
        \node at (1,0) (br) {$v_2$};
        \node at (.5,.866) (tc) {$v_3$};
        \draw[->] (bl) to node[below]{$\rho_{12}$} (br);
        \draw[->] (br) to node[right]{$\rho_{12}$} (tc);
        \draw[->] (tc) to node[left]{$\rho_{12}$} (bl);
    \end{tikzpicture}\qquad\qquad
    \begin{tikzpicture}[scale=2]
        \node at (0,0) (blp) {$v_1\otimes p$};
        \node at (-.5,-.5) (blq) {$v_1\otimes q$};
        \node at (1,0) (brp) {$v_2\otimes p$};
        \node at (1.5,-.5) (brq) {$v_2\otimes q$};
        \node at (.5,.866) (tcp) {$v_3\otimes p$};
        \node at (.5,1.57) (tcq) {$v_3\otimes q$};
        \draw[->, bend right=30] (blp) to node[below]{$t$} (brq);
        \draw[->, bend right=30] (brp) to node[right]{$t$} (tcq);
        \draw[->, bend right=30] (tcp) to node[left]{$t$} (blq);
        \draw[->] (blp) to (blq);
        \draw[->] (brp) to (brq);
        \draw[->] (tcp) to (tcq);
    \end{tikzpicture}
    \caption{\textbf{An example of the second case in the proof of Proposition~\ref{prop:other-easy-dir}}. Left: the immersed curve invariant, with conventions as in Figure~\ref{fig:admissible-case}. This pair is not admissible. Center: the corresponding bordered invariant. Right: the tensor product with $\underline{\mathcal{S}}$.}
    \label{fig:inadmissible-case}
\end{figure}

\begin{remark}
An irreducible rational homology solid torus $M$ is a solid torus if and only if every component of the immersed curve $\HFa(M)$ is homotopic to $\lambda^j$ for some $j$. This follows from the fact that $M$ is a solid torus if and only if $\lambda$ bounds a disk in $M$ and $\lambda$ compresses if and only if $\CFDa(M))\simeq P$ such that $\iota_1P=0$ (see \cite[Proposition 7.13]{HRW}). 
\end{remark}

\section{Seifert fibered HFSTs}\label{sec:seifert}
\begin{proof}[Proof of Theorem~\ref{thm:seifert}]
Let $M$ be a Seifert fibered HFST with rational longitude $\lambda$.  
So, $M$ is a circle bundle over a 2-dimensional orbifold $F(p_1,\ldots, p_k)$ which we write to mean that the underlying 2-manifold is $F$ and there are $k$ cone points of orders $p_1,\ldots, p_k \geq 2$.  (If we had some $p_j = 1$, then the fiber over that cone point would just be a regular fiber, so we can throw these out.)  Since $M$ has one torus boundary component, $F$ has a single boundary component.  Because $b_1(M) = 1$ and $\pi_1(M)$ surjects onto the orbifold fundamental group of $F$, which in turn surjects onto $\pi_1(F)$, we see that $F$ must be either a disk $D^2$ or a M\"obius band $N$.  

First consider the case that $F$ is a disk.  If $k = 0$ or $k = 1$, then $M$ is a solid torus, which is an HFST.  Therefore, we consider the case that $k \geq 2$.  Let $\gamma$ denote the slope of a Seifert fiber on $\partial M$ and $\lambda$ the rational longitude.  We claim that $\lambda \neq \gamma$.  If $\lambda = \gamma$, then $M(\gamma)$ is a connected sum of lens spaces by \cite[Proposition 2(b)]{Heil}.  This contradicts the fact that filling the rational longitude gives $b_1 = 1$.  Therefore, $\lambda \neq \gamma$, so we can extend the Seifert fibered structure of $M$ over $M(\lambda)$ so that the core of the Dehn filling becomes a singular fiber of order $\Delta(\gamma, \lambda)$.  In particular, $M(\lambda)$ is a Seifert fibered space of the form $S^2(p_1,\ldots, p_k, \Delta(\gamma, \lambda))$.  The only reducible Seifert fibered spaces are $RP^3 \# RP^3$ and $S^2 \times S^1$ (see, e.g., \cite[Proposition 1.12]{Hatcher}), so we will determine when $M(\lambda)$ is $S^2 \times S^1$ (and we do not need to consider the sum of $S^2 \times S^1$ with any other 3-manifolds).  While $S^2 \times S^1$ has infinitely many different Seifert structures, they all are of the form $S^2(0; p/q, -p/q)$ and hence have at most two singular fibers (see, e.g., \cite[Theorem 2.3]{Hatcher}).  Since we assumed $k \geq 2$, we must have $k = 2$ and $\Delta(\gamma, \lambda) = 1$.   In other words, the core of the Dehn filling must be a regular fiber.  Hence, $M$ is obtained by removing a regular fiber from the Seifert fibered space $S^2(0; p/q, -p/q)$, which gives $D^2(0; p/q, -p/q)$.    

Now consider the case that $F$ is a M\"obius band.  We want to show that $M(\gamma)$ always contains a non-separating 2-sphere, in the absence or presence of singular fibers.  (This will imply that $\lambda = \gamma$, since only one Dehn filling of $M$ has $b_1 = 1$.)  This is well-known, but we include a proof for completeness.  Consider $\eta$ the core curve of $F$, chosen to be disjoint from any cone points in the base orbifold, and an arc $\alpha$ in $F$ which is the generator of $H_1(F, \partial F)$ and geometrically dual to $\alpha$.  Then the preimage of $\alpha$ in $M$ is an annulus $A$ properly embedded in $M$, with boundary two copies of $\gamma$, and there is a section $\sigma$ of the circle bundle over $\eta$ which intersects $A$ in one point.  Hence, in $M(\gamma)$ we can cap off the two boundary components of $A$ with disjoint disks (coming from the surgery solid tori, and disjoint from $\sigma$), and hence we have a 2-sphere in $M(\gamma)$ which intersects $\sigma$ once.  Hence, this 2-sphere is non-separating.  
\end{proof}

\bibliographystyle{hamsalpha}
\bibliography{refs}

\end{document}